\DeclareFontFamily{U}{mathx}{\hyphenchar\font45}
\DeclareFontShape{U}{mathx}{m}{n}{
      <5> <6> <7> <8> <9> <10>
      <10.95> <12> <14.4> <17.28> <20.74> <24.88>
      mathx10
      }{}
\DeclareSymbolFont{mathx}{U}{mathx}{m}{n}
\DeclareMathAccent{\widecheck}{0}{mathx}{"71}
\DeclareSymbolFont{cyrletters}{OT2}{wncyr}{m}{n}
\DeclareMathSymbol{\Sha}{\mathalpha}{cyrletters}{"58}
\DeclareMathOperator{\tbG} {\bf{G}}
\DeclareMathOperator{\tbMT} {\mc{MT}}
\DeclareMathOperator{\trd} {tr.deg.}
\DeclareMathOperator{\rank} {rank}
\DeclareMathOperator{\Zar} {Zar}
\DeclareMathOperator{\id} {id}
\DeclareMathOperator{\nd} {nd}
\DeclareMathOperator{\Aut} {Aut}
\DeclareMathOperator{\Gal} {Gal}
\DeclareMathOperator{\defin} {def}
\DeclareMathOperator{\CC} {\mathbb{C}}
\DeclareMathOperator{\RR} {\mathbb{R}}
\DeclareMathOperator{\QQ} {\mathbb{Q}}
\DeclareMathOperator{\ZZ} {\mathbb{Z}}
\newtheorem{lemma}{Lemma}[section]
\newtheorem{thm}[lemma]{Theorem}
\newtheorem{cor}[lemma]{Corollary}
\theoremstyle{definition}\newtheorem{definition}[lemma]{Definition}
\theoremstyle{definition}
\theoremstyle{definition}
\newcommand{\mb}[1]{\mathbb{#1}}
\newcommand{\mc}[1]{\mathcal{#1}}
\newcommand{\mf}[1]{\mathfrak{#1}}
\newcommand{\ra}{\rightarrow}
\newcommand{\ol}[1]{\overline{#1}}
\newcommand{\wt}[1]{\widetilde{#1}}
\newcommand{\tb}[1]{\textbf{#1}}
\newcommand{\bs}{\backslash}
\newcommand{\wc}{\widecheck}
\begin{document}
\title[Mixed Ax-Schanuel with derivatives]{Ax-Schanuel with derivatives for mixed period mappings}
\author{Kenneth Chung Tak Chiu}
\address{Department of Mathematics,  University of Toronto, Toronto, Canada.}
\email{kennethct.chiu@alumni.utoronto.ca}
\begin{abstract}
We prove the Ax-Schanuel property of the derivatives of mixed period mappings. We also prove the jet space reformulation of this result. The proofs use the Ax-Schanuel result for principal bundles with flat connections obtained by Bl\'{a}zquez-Sanz, Casale,  Freitag, and Nagloo.
\end{abstract}
\subjclass[2020]{11G18 and 14G35} 
\maketitle

\section{Introduction}
\subsection{Motivation}
In 1971, Ax proved the function field analogue  \cite{Ax}  of the Schanuel conjecture for exponentials. 
Ax also generalized the result to exponential maps of semi-abelian varieties a year after \cite{Ax72}.
These were later extended to other functions in variational Hodge theory, e.g. the $j$-function by Pila-Tsimerman  \cite{PT}, uniformizations of Shimura varieties by Mok-Pila-Tsimerman \cite{MPT}, period mappings by Bakker-Tsimerman  \cite{BT}, and mixed period mappings by Gao-Klingler \cite{GK} and the author \cite{C} independently.  
In Pila-Tsimerman \cite{PT} and Mok-Pila-Tsimerman \cite{MPT}, Ax Schanuel theorems for derivatives of respectively the $j$-function and uniformizations of Shimura varieties were also included.  In this paper, we generalize the above results by proving the Ax-Schanuel property of the derivatives of mixed period mappings. 

These Ax-Schanuel results have applications on existential closedness problems. For instance, Ax-Schanuel for derivatives of $j$-functions by Pila-Tsimerman \cite{PT} was applied in the work of Aslanyan, Eterovi\'{c}, and Kirby \cite{AEK} on the existential closedness problem for the $j$-function; 
while Ax-Schanuel for derivatives of uniformizations of Shimura varieties  \cite{MPT} is applied in the work of Eterovi\'{c} and Zhao \cite{EZ} on the same problem for uniformizations of Shimura varieties.
Recently, Eterovi\'{c} and Scanlon \cite{ES} have applied our result to the same problem for mixed period mappings.

Moreover, Ax-Schanuel results have significant applications in Diophantine geometry. 
For instance, Ax-Schanuel for period mappings \cite{BT} was used to prove Shafarevich type conjectures for hypersurfaces   \cite{LS}\cite{LV}; while Ax-Schanuel for mixed period mappings \cite{C} \cite{GK} was used in higher dimensional Chabauty-Kim method \cite{H}. 
Gao used the Ax-Schanuel theorem for mixed Shimura varieties \cite{G} to study the generic rank of Betti map \cite{Gao20}, which was then used by Dimitrov-Gao-Habegger \cite{DGH} to prove a uniform bound for the number of rational points on curves.
Ax-Schanuel also have applications on the geometric aspects of the Zilber-Pink conjecture \cite{BKU}\cite{BD}\cite{DR}\cite{PilSca} on unlikely intersections via the Pila-Zannier method \cite{PZ}.  A detailed account of the Zilber-Pink conjecture can be found in Pila's recent book \cite{Pila22}.

After the appearance of the initial version of this paper, Bakker and Tsimerman \cite{BT22} prove a geometric version of Andr\'{e}'s generalization of the Grothendieck period conjecture. This generalizes many Ax-Schanuel theorems.

We will use the notions of Mumford-Tate groups, variations of mixed $\ZZ$-Hodge structures, and mixed period mappings. A reference is  \cite{Kli17}.
 
\subsection{Statement of results}
Let $X$ be a smooth irreducible quasiprojective complex algebraic variety over $\CC$ equipped with an admissible graded-polarized variation $(\mc{H}, \mc{W}_{\bullet}, \mc{F}^{\bullet}, \mc{Q})$ of mixed $\ZZ$-Hodge structures (VMHS), where $\mc{W}_{\bullet}$ is the weight filtration, $\mc{F}^{\bullet}$ is the Hodge filtration, and $\mc{Q}$ is the graded-polarization. 
Let $\eta$  be a Hodge generic point of $X$. Let $\Gamma$ be the monodromy group of the underlying local system of the variation.  Let $\tbG$ be the identity component of the  $\QQ$-Zariski closure of $\Gamma$ in the automorphism group of the underlying $\QQ$-vector space of the generic mixed Hodge structure at $\eta$.  Let $\tbG_u$ be the unipotent radical of $\tbG$. 
Let $H_\eta$ be the graded-polarized mixed Hodge structure at $\eta$. 
Let $\mc{M}$ be the classifying space parametrizing mixed $\RR$-Hodge structures with the same graded-polarization and Hodge numbers as $H_\eta$. 
Let $\widecheck{\mc{M}}$ be the corresponding projective space parametrizing decreasing filtrations \cite[\S 3.5]{BBKT}.
Let $D$ be the $\tbG(\RR)^+\tbG_u(\CC)$-orbit of $H_\eta$ in $\mc{M}$, where $\tbG(\RR)^+$ is the identity component of $\tbG(\RR)$.
Let $\widecheck{D}$ be the $\tbG(\CC)$-orbit of $H_\eta$ in  $\wc{\mc{M}}$.

Let $\tbG(\ZZ):= \tbG(\QQ)\cap \textbf{Aut}(\mc{H}_{\ZZ,\eta})$. 
Let $\tbG(\ZZ)^+:= \tbG(\mb{Z})\cap \tbG(\mb{R})^+$.
There exists a finite index subgroup $\Gamma_1$ of $\tbG(\ZZ)^+$ such that the quotient map $q: D\ra \Gamma_1\bs D$ is an unramified covering, see \cite[\S 3.3.1]{Kli17}.
First assume $\Gamma\subset \Gamma_1$. 
This is assumed everywhere outside Corollary \ref{Ax-Schanuel for variations of mixed Hodge structures: mod gamma}, Corollary \ref{mixed AS, coordinates} and their proofs.  Let $\psi : X \ra \Gamma\bs D$ be the period mapping\footnote{By \cite[Theorem A.5]{C}, the lifting of $\psi$ to the universal cover maps into $D$.} attached to the variation. 
Let $\phi$ be the composition of $\psi$ with $\Gamma\bs D\ra \Gamma_1\bs D$. 
Let $q':D\ra \Gamma\bs D$  be the quotient map.

\begin{definition}
Let $H$ be any mixed $\ZZ$-Hodge structure in $D$. 
Let $\tb{M}$ be a normal algebraic $\mb{Q}$-subgroup of the Mumford-Tate group $\tbMT_{H}$ of $H$. Let $\tb{M}_u$ be its unipotent radical. Let $\tb{M}(\RR)^+$ be the identity component of $\tb{M}(\RR)$. The $\tb{M}(\mb{R})^+\tb{M}_u(\mb{C})$-orbit $D(\tb{M})$ of $H$ is called a \emph{weak Mumford-Tate domain}. For any such $D(\tb{M})\subset D$, any irreducible component of  $\phi^{-1}q (D(\tb{M}))$ is called a \emph{weakly special subvariety} of $X$.
\end{definition}

By \cite[Corollary 6.7]{BBKT}, weakly special subvarieties are indeed algebraic.
Let $k$ be a non-negative integer. 
Let $d$ be a positive integer.
For any complex analytic space $Z$, denote by $J^d_k Z$ the analytic space of $k$-jets into $Z$ from the $d$-dimensional disk. 
Let $J^{\nd,d}_kZ$ be the analytic subspace of non-degenerate $k$-jets (i.e. $k$-jets which induce embeddings of tangent spaces).
If $Z$ is algebraic, then $J^d_k Z$ and  $J^{\nd,d}_kZ$ are equipped with algebraic structures \cite[\S 2]{Urb24}.
Consider the fiber product
\begin{center}
\begin{tikzcd}
W_k\arrow[r]\arrow[d] & J^d_kD\arrow[d,"J^d_kq"]
\\J^d_kX\arrow[r, "J^d_k\phi"] & J^d_k(\Gamma_1\backslash D).
\end{tikzcd}
\end{center}
Let $\pi_{J^d_kX}: J^d_kX\times J^d_kD \ra J^d_kX$ be the projection map.
Let $\pi_X: J^d_kX\ra X$ be the map defined by projecting the $k$-jet of a germ to the center of the germ.
For any irreducible analytic subset $U$ of $W_k$, denote by $U^{\Zar}$ its Zariski closure in $J^d_kX\times J^d_k\wc{D}$.

\begin{thm}\label{main theorem}
 Let $U$ be an irreducible analytic subset of $W_k$.  If 
 $$\dim U^{\Zar}-\dim U<\dim W_k^{\Zar}-\dim W_k,$$ 
 then $\pi_X(\pi_{J^d_kX}(U))$ is contained in a proper weakly special subvariety of $X$.
\end{thm}

Let $\psi: X\ra \Gamma\bs \mc{M}$ be the period mapping.
Let $\pi_{J^d_kX}: J^d_kX\times J^d_k\mc{M} \ra J^d_kX$ be the projection onto $J^d_k\mc{M}$.
Let $q': \mc{M}\ra \Gamma\bs\mc{M}$ be the quotient map.
Consider the fiber product
\begin{center}
\begin{tikzcd}
W_{k,\Gamma}\arrow[r]\arrow[d] & J^d_k\mc{M}\arrow[d,"J^d_kq'"]
\\J^d_kX\arrow[r, "J^d_k\psi"] & J^d_k(\Gamma\backslash \mc{M}).
\end{tikzcd}
\end{center}
Theorem \ref{main theorem} will be used to deduce the following corollary without the assumption that $\Gamma\subset \Gamma_1$.

\begin{cor}\label{Ax-Schanuel for variations of mixed Hodge structures: mod gamma}
Let $U$ be an irreducible analytic subset of $W_{k,\Gamma}$. 
Let $U^{\Zar}$ be the Zariski closure of $U$ in $J^d_kX\times J^d_k\wc{\mc{M}}$.   
If 
$$\dim U^{\Zar}-\dim U<\dim W_{k,\Gamma}^{\Zar}-\dim W_{k,\Gamma},$$ 
then $\pi_X(\pi_{J^d_kX}(U))$ is contained in a proper weakly special subvariety of $X$.
\end{cor}

Let $\Delta$ be the open unit disk. We have the following version of mixed Ax-Schanuel in terms of transcendence degree and derivatives:

\begin{cor}\label{mixed AS, coordinates}
Let $\wt{\phi}$ be a local lifting of the period mapping $\phi$ on an open subset $B$. 
Let $v: \Delta^{\dim \wc{D}}\ra \widecheck{D}$ and  $u: \Delta^{\dim X}\ra  B$ be open embeddings, obtained by restricting affine charts,  such that  $(\wt{\phi}\circ u)(\Delta^{\dim X})\subset v(\Delta^{\dim \wc{D}})$. 
Let $f: \Delta^d\ra B$ be a holomorphic mapping such that $f(\Delta^d)\subset u(\Delta^{\dim X})$.
Write $z=(z_1,\dots, z_d)$, where $z_i$ are the coordinates of $\Delta^d$.
If
$$\trd_{\CC} \CC(\partial^{\alpha}(u^{-1}\circ f)(z), \partial^{\alpha}(v^{-1}\circ {\wt{\phi}}\circ f)(z): |\alpha|\leq k)< \rank(f)+\dim W_{k,\Gamma}^{\Zar}-\dim W_{k,\Gamma},$$
then $f(\Delta^d)$ is contained in a proper weakly special subvariety of $X$.
\end{cor}

\subsection{Some recent literature on Ax-Schanuel for other functions}
There are several works proving Ax-Schanuel theorems for other functions. Baldi and Ullmo \cite{BU} prove the Ax-Schanuel theorem for certain non-arithmetic ball quotients. They use Simpson's theory in addition to o-minimality and monodromy (Andr\'{e}-Deligne). Bl\'{a}zquez-Sanz, Casale,  Freitag, and Nagloo \cite{BCFN} prove the Ax-Schanuel theorem with derivatives for uniformizers of any Fuchsian group of the first kind and any genus. Their proof uses Ax's original arguments, principal bundles with flat connections, the Maurer-Cartan structure equation, and the model theory of differentially closed fields. 
Nevanlinna theory approach to Ax-Schanuel theorems is also studied by Huang-Ng \cite{HN} and Noguchi \cite{Nog24}.
Papas \cite{Pap} proves the Ax-Schanuel theorem for the exponential functions for general linear groups.

\subsection{Strategy}
Bl\'{a}zquez-Sanz, Casale,  Freitag, and Nagloo established in  \cite{BCFN} the Ax-Schanuel theorem for algebraic principal bundles with flat connections. They proved that if the algebraic group acting on the bundle is sparse (a notion introduced in their paper concerning the analytic subgroups), and if the dimension of an algebraic subvariety of the bundle does not drop too much after intersection with a leaf, then the projection of the intersection under the bundle map is contained in a $\nabla$-special subvariety, which was also introduced in their paper. 

To use their result, we prove in Section \ref{Flat $G$-bundle attached to the mixed period mapping} that  when $k\gg 0$, the set  $P:=\tbG(\CC)\cdot W_k$ is an algebraic principal bundle over $X$, and that there is an algebraic flat connection on $P$ where each leaf is of the form $g\cdot W_{k,\Gamma}$ for some $g\in \tbG(\CC)$. In particular, the algebraicity is proved in Lemma \ref{algebraicity of the bundle} using Scanlon's work on algebraic differential equations from covering maps \cite{Sca18} and the definable fundamental set for the action of $\tbG(\ZZ)^+$ on $D$ constructed in \cite{C}. 
In Theorem \ref{construction, foliated principal bundle}, the algebraic flat connection is constructed using the definable GAGA \cite{BBT}.

In Section \ref{Ax-Schanuel for principal connections}, we use Andr\'{e}-Deligne \cite{A} to prove that any $\nabla$-special subvariety of $X$ is contained in a proper weakly special subvariety. We use the semisimple-unipotent Levi decomposition of $\tbG(\CC)$ in \cite{A} to prove that $\tbG(\CC)$ is sparse. Then in Section \ref{Proof}, we prove our main theorems for all $k\geq 0$ by applying the aforementioned Ax-Schanuel theorem for principal bundles \cite{BCFN} followed by projection to lower order jet spaces.

\subsection{Acknowledgements}
I would like to thank my advisor Jacob Tsimerman for helpful discussions and comments.

\section{Foliated jet bundle attached to the mixed period mapping}\label{Flat $G$-bundle attached to the mixed period mapping}\label{Algebraic jet bundle attached to the mixed period mapping}

A subset of $\RR^n$ is said to be definable if it is definable in the o-minimal structure $\RR_{an,\exp}$ \cite{DM}. We refer to \cite[Section 2]{JW} for an introduction to o-minimality.

The group $\tbG_{\CC}$ acts on $J^d_kX\times J^d_k\wc{D}$ by acting trivially on $J^d_kX$ and acting on $J^d_k\wc{D}$ by postcomposition by automorphism.
By \cite[Section 5.1]{C}, there exists an open definable fundamental set $F'$ for the action of $\tbG(\ZZ)^+$ on $D$.
Since $\Gamma_1$ is of finite index in $\tbG(\ZZ)^+$, there exists an open definable fundamental set $F$ for the action of $\Gamma_1$ on $D$.
By \cite[Prop. 2.3]{BBKT}, $q|_F$ is definable. 
Let $$W_{k,F}:= W_k\cap (J^d_k X\times J^d_kF).$$ 

\begin{lemma}\label{definable graph}
The set $W_{k,F}$ is definable.
\end{lemma}

\begin{proof}
By definition of $W_k$,
$$W_{k,F}=\{(j_1,j_2)\in  J^d_k X\times J^d_kF: (J^d_k\phi)(j_1)= (J^d_k q) (j_2)\}.$$
We have $(J^d_k q) (j_2)=J^d_k (q|_F)(j_2)$ for any $j_2\in J^d_kF$.
By \cite[Theorem 1.1]{BBKT} (see also \cite[Lemma 4.3]{C}), the period mapping $\phi$ is definable.
Since $J^d_k$ is a functor on the category of definable analytic spaces \cite[\S 4.6]{Urb23}, $J^d_kF$, 
$J^d_k (q|_F)$ and $J^d_k\phi$ are definable.
Therefore, $W_{k,F}$ is definable.
\end{proof}

A subset of an analytic variety is \emph{analytically constructible} (\emph{algebraically constructible}) if it is in the Boolean algebra generated by closed, complex analytic (algebraic) subvarieties.

\begin{lemma}\label{algebraicity of the bundle}
The set $W_k$ is a closed analytic subvariety of $J^d_kX\times J^d_kD$.
The set $P:=\tbG(\CC)\cdot W_k=\tbG(\CC)\cdot W_{k,\Gamma}$ is an algebraically constructible subvariety of $J^d_kX\times J^d_k\wc{D}$.
\end{lemma}

\begin{proof}
Let $Y:=J^d_k(\Gamma_1\bs D)$.
The fiber product $W_k$ is the preimage of the diagonal under 
$J^d_kX\times  J^d_kD\ra Y\times Y$. 
Since $Y$ is Hausdorff, the diagonal is closed in $Y\times Y$,
so $W_k$ is closed in $J^d_kX\times J^d_kD$. 

By the discussion after Lemma 2.18 in \cite{ES} (see also \cite{Sca18}), there is an algebraically constructible map $\widetilde{\chi}: J^d_k\wc{D}\ra Z$ to some algebraic variety $Z$ such that $\wt{\chi}(j_1)=\wt{\chi}(j_2)$ if and only if there exists $g\in \tbG(\CC)$ such that $g\cdot j_1=j_2$.
Thus the map $\chi:= \wt{\chi}\circ (J^d_kq)^{-1}\circ J^d_k\phi$ is well-defined and algebraically constructible (see \emph{loc. cit.} and Theorem 3.12 in \cite{Sca18}).
The set $P:=\tbG(\CC) \cdot W_k$ is therefore equal to the algebraic variety defined by the equation $\chi(t)=\wt{\chi}(y)$.
\end{proof}

\begin{lemma}\label{bundle equals closure of set of k-jets}
We have $P=W_{k,\Gamma}^{\Zar}=W_k^{\Zar}$.
\end{lemma}

\begin{proof}
Since $\Gamma\cdot W_{k,\Gamma}=W_{k,\Gamma}$, we have $\tbG_{\CC}\cdot W_{k,\Gamma}^{\Zar}= W_{k,\Gamma}^{\Zar}$. By Lemma \ref{algebraicity of the bundle}, $\tbG(\CC)\cdot W_{k,\Gamma}=\tbG(\CC)\cdot W_k=:P$ is algebraic, so  $P=W_{k,\Gamma}^{\Zar}=W_k^{\Zar}$.
\end{proof}

We explain the idea of the proof of the following lemma.  
We first prove that if $g\in \tbG(\CC)$ stabilizes the composition of a local lifting and a non-degenerate infinite jet, then $g\in K$. By Noether's chain condition, similar statement holds when the infinite jet is truncated at some finite order. We then make this order independent of the local lifting and the infinite jet using Lemma \ref{algebraicity of the bundle} and the chain condition a second time.

Let $K$ be the kernel of homomorphism $\tbG_{\CC}\ra \Aut(\wc{D})$ induced by the $\tbG_{\CC}$-action on $\wc{D}$.

\begin{lemma}\label{free action on jet space}
There exists an integer $k_0>0$ such that  $K$ is the $\tbG_{\CC}$-stabilizer of any element in $W_k$ for any $k\geq k_0$.
\end{lemma}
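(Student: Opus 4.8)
The plan is to determine the $\tbG_{\CC}$-stabilizer first for full germs of local liftings, then to descend to jets of finite order, and finally to make that order uniform. To begin with, $H\subseteq\operatorname{Stab}_{\tbG_{\CC}}(w)$ for every $w\in J_k(X,\wc{D})$, because $H$ acts trivially on $\wc{D}$ by definition; and $\operatorname{Stab}_{\tbG_{\CC}}(w)$ is a Zariski closed subgroup of $\tbG_{\CC}$, since $\tbG_{\CC}$ acts algebraically on $J_k(X,\wc{D})$. It is enough to prove the reverse inclusion for jets in $W_{k,F}$: every germ of a local lifting is, on a small enough ball, a $\tbG(\ZZ)$-translate of one into $F$, so $W_k=\tbG(\ZZ)\cdot W_{k,F}$, and $\operatorname{Stab}_{\tbG_{\CC}}(\gamma w)=\gamma\operatorname{Stab}_{\tbG_{\CC}}(w)\gamma^{-1}$ with $H$ normal in $\tbG_{\CC}$. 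Finally, a local lifting is determined by its value at one point, so truncation gives bijections $W_{k,F}\xrightarrow{\ \sim\ }W_{0,F}$, and $\dim W_k=\dim W_0=\dim X$.

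For the germ case I would follow the strategy sketched before the lemma. Using the Griffiths conjecture of Bakker-Brunebarbe-Tsimerman \cite{BBT}, reduce to the case where local liftings are submersions onto their images, so the image of a local lifting $\wt{\phi}\colon B\ra D$ is a complex submanifold of $D$. If $g\in\tbG_{\CC}$ fixes the germ of $\wt{\phi}$ at $x_0$, then $g\circ\wt{\phi}=\wt{\phi}$ on a neighbourhood $B'\ni x_0$, so $g$ fixes $\wt{\phi}(B')$ pointwise, and the $(\dim X)$-dimensional analytic set $\{\,(x,\wt{\phi}(x)):x\in B'\,\}$ lies in $W_0\cap(X\times\wc{D}^g)$, where $\wc{D}^g$ is the fixed locus of $g$ in $\wc{D}$. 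Assume $g\notin H$. Then $g$ acts non-trivially on the irreducible projective variety $\wc{D}$, so $\wc{D}^g$ is a proper closed subvariety; the component $U$ of $W_0\cap(X\times\wc{D}^g)$ through the above graph has $\dim U=\dim W_0$ and $U^{\Zar}\subseteq X\times\wc{D}^g$, whence
$$\dim U^{\Zar}-\dim U\ \leq\ \dim\wc{D}^g\ <\ \dim\wc{D}\ =\ \dim W_0^{\Zar}-\dim W_0,$$
the last equality because $W_0$ is Zariski dense in $X\times\wc{D}$ (as $\eta$ is Hodge generic and $\Gamma$ is Zariski dense in $\tbG$). By Ax-Schanuel for mixed period mappings \cite{C}\cite{GK}, $\pi_X(U)$ would then lie in a proper weakly special, hence proper closed, subvariety of $X$; but $\pi_X(U)\supseteq B'$ is open in the irreducible $X$, a contradiction. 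Hence $g\in H$, so the $\tbG_{\CC}$-stabilizer of the germ of any local lifting equals $H$.

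Next I would descend to finite order. For fixed $\wt{\phi}$ and $x_0$, the descending chain of algebraic subgroups $\operatorname{Stab}_{\tbG_{\CC}}(j^0_{x_0}\wt{\phi})\supseteq\operatorname{Stab}_{\tbG_{\CC}}(j^1_{x_0}\wt{\phi})\supseteq\cdots$ stabilizes; its stable value fixes every $j^k_{x_0}\wt{\phi}$, hence the germ, hence equals $H$ by the previous step, so $\operatorname{Stab}_{\tbG_{\CC}}(j^k_{x_0}\wt{\phi})=H$ once $k\geq k(\wt{\phi},x_0)$. To remove the dependence on $\wt{\phi}$, form for each $k$ the set $\mathcal{S}_k\subseteq W_{0,F}\times\tbG_{\CC}$ of pairs $(w,g)$ with $g$ stabilizing the $k$-jet of the local lifting into $F$ with $0$-jet $w$; the $\mathcal{S}_k$ decrease, and by the germ case $\bigcap_k\mathcal{S}_k=W_{0,F}\times H$. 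The sets $W_{0,F}$ and $\mathcal{S}_k$ are definable complex analytic (using definability of $\phi$, of $F$, and of $\tbG_{\CC}$), hence algebraically constructible by the definable Chow theorem of Peterzil-Starchenko \cite{PS}; consequently so are the decreasing sets $\mathcal{B}_k:=\mathcal{S}_k\setminus(W_{0,F}\times H)$, which intersect to $\emptyset$. Their Zariski closures $\overline{\mathcal{B}_k}$ form a descending chain of subvarieties and therefore stabilize, say at $T$ (Noetherianity, used a second time). If $T\neq\emptyset$, pick an irreducible component $T_1$ of $T$: for every large $k$ the constructible set $\mathcal{B}_k$ has Zariski closure $T$, hence contains a dense open subset $O_k$ of $T_1$; but $\bigcap_k O_k\subseteq\bigcap_k\mathcal{B}_k=\emptyset$, so the irreducible $\CC$-variety $T_1$ equals the countable union $\bigcup_k(T_1\setminus O_k)$ of proper closed subvarieties, which is impossible. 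Thus $T=\emptyset$, so $\mathcal{S}_k=W_{0,F}\times H$ for all $k\geq k_0$ with some $k_0$; combined with the reduction of the first paragraph, this proves the lemma.

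The step I expect to be the main obstacle is this uniformity in the order of jets: the order $k(\wt{\phi},x_0)$ is a priori lift-dependent, and the incidence sets $\mathcal{B}_k$, being merely constructible, need not satisfy a descending chain condition. The way around it is to trade analyticity for algebraicity via the definable Chow theorem, apply Noetherianity to the Zariski closures, and use that an irreducible complex variety is not a countable union of proper subvarieties. A secondary point is to check that $W_{k,F}$, the images of liftings, and the $\mathcal{B}_k$ are genuinely definable in $\RR_{an,\exp}$; this is where the definable fundamental set $F$ from \cite{C} and the reduction via \cite{BBT} come in.
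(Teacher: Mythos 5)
Your proposal is correct and follows essentially the same route as the paper: reduce to jets into the fundamental set $F$ via normality of $H$, identify the germ stabilizer with $H$ by applying Ax-Schanuel without derivatives to the fixed locus of $g$ in $\wc{D}$, pass to finite order by a Noetherian descending chain of stabilizers, and make the order uniform via the definable Chow theorem together with a second chain/countability argument. The only (harmless) deviations are that your contradiction via openness of $\pi_X(U)$ makes the Bakker-Brunebarbe-Tsimerman factorization essentially unnecessary, and that you organize the uniformity step through incidence sets in $W_{0,F}\times\tbG_{\CC}$ with the Baire-type argument spelled out, where the paper works with the sets $X_k\subseteq X$ and leaves that final stabilization implicit.
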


\begin{proof}
Let $J^{\nd,d}_kZ$ be the analytic subspace of non-degenerate $k$-jets.
Let $\iota: J^{\nd,d}_kX\hookrightarrow J^{d}_kX$ be the open immersion.
Consider the fiber product
\begin{center}
\begin{tikzcd}
W_k^{\nd}\arrow[r]\arrow[d] & J^d_kD\arrow[d,"J^d_kq"]
\\J^{\nd,d}_kX\arrow[r, "(J^d_k\phi)\circ \iota"] & J^d_k(\Gamma_1\backslash D).
\end{tikzcd}
\end{center}
The set $W^{\nd}_k$ is open in $W_k$.
The irreducible components of $W_k$ are $\Gamma_1$-translates of each other,
so $W^{\nd}_k$ intersects non-emptily with each irreducible component of $W_k$.
It suffices to show that there exists an integer $k_0>0$ such that  $K$ is the $\tbG_{\CC}$-stabilizer of any element in $W^{\nd}_k$ for any $k\geq k_0$.
Since $K$ is normal in $\tbG_{\CC}$, it suffices to show that there exists an integer $k_0>0$ such that $K$ is the $\tbG_{\CC}$-stabilizer of any element in $W_{k, F}\cap W_k^{\nd}$ for any $k\geq k_0$.

Let $j$ be the germ at the origin of a non-degenerate analytic map from the $d$-dimensional disk into $X$.
Let $\lambda: B\ra F$ be a local lifting of the period mapping into the fundamental domain $F$,
where $B$ is an open subset of $X^{an}$ containing $j(0)$. 
Let $j_k$ be the $k$-jet of $j$. Let $S_{j,\lambda, k}$ be the $\tbG_{\CC}$-stabilizer of $\lambda\circ j_k$.
Let $K'$ be the pointwise $\tbG(\CC)$-stabilizer of the image of $W_{0,\Gamma}\ra \mc{M}$.
The germ $\lambda\circ j$ is fixed by a conjugate $K''$ of $K'$ in $\tbG(\CC)$.
We have $K''=\bigcap_{k\geq  0}S_{j,\lambda, k}$ by the non-degeneracy of the germ $j$ and the identity theorem. 

If $a\in K'$, then $a$ fixes the Zariski closure of the image of $W_{0,\Gamma}\ra \mc{M}$ in $\wc{\mc{M}}$. By \cite[Lemma A.5]{C}, $D$ is contained in this Zariski closure, so $a$ fixes $D$.
Since $D$ is open in $\wc{D}$ and $\wc{D}$ is connected, $a$ fixes $\wc{D}$, so $a\in K$.
On the other hand,  if $a\in K$, then $a\in K'$ because the image $W_{0,\Gamma}\ra \mc{M}$ is contained in $D$ by \cite[Lemma A.5]{C}.
Therefore, $K'=K$, so $K''=K$ by normality of $K$.

 The sequence $\{S_{j, \lambda, k}\}_{k\geq 0}$ of subgroups of $\tbG_{\CC}$ is decreasing. 
 Since $\tbG_{\CC}$ is Noetherian, there exists $k_j>0$ such that $K=S_{j,\lambda, k}$ for all $k\geq k_j$.  

For any $k\geq 0$, let $T_k$ be the subset of jets $j$ in $J^{\nd,d}_kX$ for which the $\tbG_{\CC}$-stabilizer of $\lambda\circ j$ is equal to $K$, 
where $\lambda: B\ra F$ is some local lifting of the period mapping into the definable fundamental domain $F$,
where $B$ is an open subset of $X^{an}$ containing $j(0)$.   
From the discussion above, $J^{\nd,d}_k X=\bigcup_{k\geq 0}T_k$. 
Let 
$\alpha:\tbG(\CC)\times P\ra P\times P$ be the map defined by $(g,j,j')\mapsto (j, j', j, g\cdot j')$.
Let $\Delta$ be the diagonal in $P\times P$.
Let $A_k:=\Delta \cap \alpha((\tbG(\CC)\bs K)\times P)$.
The complement of the projection of $A_k$ in $J^{\nd,d}_k X$ is equal to $T_k$.
By Lemma \ref{algebraicity of the bundle}, $P$ is algebraically constructible, so $A_k$ and thus $T_k$ are algebraically constructible.
The sequence $\{T_k\}$ is increasing.  Hence, there exists $k_0>0$ such that $J^{\nd,d}_k X=T_k$ for all $k\geq k_0$. The claim follows.
\end{proof}

\begin{thm}\label{construction, foliated principal bundle}
Let $k\geq k_0$. 
The map $\pi_{J^d_kX}|_P: P\ra J^d_kX$ is a principal $\tbG (\CC)/K$-bundle. 
It is equipped with an algebraic connection 
$$\nabla: TJ^d_kX\times_{J^d_kX} P\ra TP,$$
i.e. it is an algebraic morphism satisfying the following properties:
\begin{itemize}
\item $d\pi_{J^d_kX}|_P(\nabla_{v,p})=v$, where $\nabla_{v,p}$ means $\nabla(v,p)$,
\item ($\tbG_{\CC}$-invariance) $\nabla_{v, L_g(p)}=dL_g (\nabla_{v,p})$, where $L_g$ is the action on $P$ by $g$,
\item (flatness) the lift operator of vector fields is a Lie algebra morphism.
\end{itemize}
There is a foliation on $P$ where each leaf is of the form $g\cdot W_{k,\Gamma}$ for some $g\in \tbG(\CC)$, and vice versa. The leaves are transverse to the fibers of the bundle.
\end{thm}

\begin{proof}
We have an algebraic morphism $(\tbG_{\CC}/K)\times P\ra P\times_{J^d_kX}P$ given by $(gK,j,j')\mapsto (j, j', j, g\cdot j')$.
By Lemma \ref{free action on jet space}, it is an isomorphism, so $\pi_{J^d_kX}|_P: P\ra J^d_kX$ is a principal $\tbG (\CC)/K$-bundle.

There exists a cover $\{E_{\alpha}\}_{\alpha\in I}$ of $X$ by definable open subsets with definable local liftings 
$\lambda_{\alpha}: E_{\alpha}\ra D$ of period mapping, see \cite[Prop. 5.2]{BBKT}. 
Let $W_{k,\alpha}$ be the set of pairs $(j,(J^d_k\lambda_{\alpha}) (j))$, where $j\in J^d_k E_{\alpha}$.
By Lemma \ref{free action on jet space}, the map 
$$\kappa_{\alpha}:(\tbG(\CC)/K)\times J^d_kE_{\alpha}\ra (\tbG(\CC)/K)\cdot W_{k,\alpha}=\pi_{J^d_kX}|_P^{-1}(J^d_kE_{\alpha})$$
defined by 
$(gK,j)\mapsto (j, g\cdot (J^d_k\lambda_{\alpha})(j))$ 
is bijective.
We define an algebraic connection $\nabla$ as follows: 
for any $(v,p)\in TJ^d_kE_{\alpha}\times_{J^d_kX} P$, 
we let $\nabla_{v,p}=(d\kappa_{\alpha})_{\kappa_{\alpha}^{-1}(p)}(0,v)$.

Firstly, it is well-defined since it is independent of $E_{\alpha}$. Indeed, suppose  
$$(v,p)\in (TJ^d_kE_{\alpha}\times_{J^d_kX} P)\cap  (TJ^d_kE_{\beta}\times_{J^d_kX} P),$$
where $\alpha,\beta\in I$.
Write $$p=(j,g_{\alpha}\cdot (J^d_k\lambda_{\alpha})(j))=(j,g_{\beta}\cdot (J^d_k\lambda_{\beta})(j)),$$
where $g_{\alpha},g_{\beta}\in \tbG(\CC)$.
It suffices to show that 
$$(dL_{g_{\alpha}}\circ d(J^d_k\lambda_{\alpha}))_j(v)=(dL_{g_{\beta}}\circ d(J^d_k\lambda_{\beta}))_j(v).$$
There exists $\gamma\in \Gamma_1$ such that $\gamma\lambda_{\alpha}(j(0))=\lambda_{\beta}(j(0))$.
Thus, $\gamma\lambda_{\alpha}=\lambda_{\beta}$ on some small open subset in $E_{\alpha}\cap E_{\beta}$ containing $j(0)$.
We thus have $(J^d_k\lambda_{\alpha})(j)=g_{\alpha}^{-1}g_{\beta}\gamma (J^d_k\lambda_{\alpha})(j)$.
By Lemma \ref{free action on jet space}, $g_{\alpha}^{-1}g_{\beta}\gamma\in K$, 
so $L_{g_{\alpha}^{-1}g_{\beta}\gamma}\circ J^d_k\lambda_{\alpha}=J^d_k\lambda_{\alpha}$. 
Now we have
\begin{align*}
(dL_{g_{\beta}}\circ d(J^d_k\lambda_{\beta}))_j(v)
&=(dL_{g_{\beta}}\circ dL_{\gamma}\circ d(J^d_k\lambda_{\alpha}))_j(v)
\\&=(dL_{g_{\alpha}}\circ dL_{g_{\alpha}^{-1}g_{\beta}\gamma} \circ d(J^d_k\lambda_{\alpha}))_j(v)
\\&=(dL_{g_{\alpha}}\circ d(J^d_k\lambda_{\alpha}))_j(v).
\end{align*}

Since $\lambda_{\alpha}$ is definable, the map $\kappa_{\alpha}$ is definable.
Let $\Pi$ be the graph of the bundle map 
$$T(\tbG(\CC)/K)\times TJ^d_k E_{\alpha}\ra \tbG(\CC)/K\times J^d_kE_{\alpha}.$$
On $TJ^d_kE_{\alpha}\times_{J^d_kX} P$, the map $\nabla$ is the restriction, to the projection of 
$(\id,\id, \kappa_{\alpha})\Pi $ in $TJ^d_kE_{\alpha}\times (\tbG(\CC)/K)\cdot W_{k,\alpha}$, 
of the composition of $d\kappa_{\alpha}$ and the map
$$TJ^d_kE_{\alpha}\times (\tbG(\CC)/K)\cdot W_{k,\alpha}\ra T(\tbG(\CC)/K)\times TJ^d_kE_{\alpha}$$
given by $(v_1, p_1)\mapsto (0_{g_1}, v_1)$, where $g_1$ is the $\tbG(\CC)/K$-coordinate of ${\kappa^{-1}_{\alpha}(p_1)}$ and $0_{g_1}$ is the zero tangent vector at $g_1$.
Therefore, $\nabla$ is definable.
Let $s$ be an algebraic section of $TJ^d_kX\ra X$.
Then $\nabla(s)$ is a definable section of $TP\ra P$,
which induces a morphism $\mc{O}_X^{\defin}\ra (TJ^d_kX)^{\defin}$ of definable coherent sheaves.
The section $\nabla(s)$ is thus algebraic by definable GAGA \cite[Theorem 3.1]{BBT}.
Therefore, $\nabla$ is algebraic.

Since $(\pi_{J^d_kX}|_P\circ\kappa_{\alpha})(gK,j)=j$, we have $d\pi_{J^d_kX}|_P(\nabla_{v,p})=v$.
Since $\kappa_{\alpha}$ is $\tbG_{\CC}$-equivariant, we have  $\nabla_{v, L_g(p)}=dL_g (\nabla_{v,p})$.
Recall that Lie bracket of two vector fields measures the failure of their flows to commute.
Flatness then follows from the isomorphism $\kappa_{\alpha}$ restricted to 
$\{gK\}\times J^d_kE_{\alpha}$
for each $g\in \tbG(\CC)$.
Let 
$$\mc{S}:=\{g\cdot W_{k,\alpha}: g\in \tbG(\CC), \alpha \in I\},$$
Define an equivalence relation $\sim$ on $\mc{S}$ as follows: $g_0\cdot W_{k,\alpha_0}\sim g_\ell\cdot W_{k,\alpha_\ell}$ in $\mc{S}$ if and only if there exist $g_i\cdot W_{k,\alpha_i}\in \mc{S}$ for each $0< i < \ell$, such that $g_{i-1}\cdot W_{k,\alpha_{i-1}}\cap  g_i\cdot W_{k,\alpha_i}\neq \varnothing$ for all $1\leq i\leq \ell$. Then we have a foliation on $P$ where each leaf has the same dimension as $X$ and is of the form
$$
\bigcup_{g\cdot W_{k,\alpha}\sim g_0\cdot W_{k,\alpha_0}} g\cdot W_{k,\alpha}
\quad \text{ for some }  g_0\cdot W_{k,\alpha_0}\in \mc{S},
$$
and vice versa.
The transversality follows from that $\kappa_{\alpha}$ is an isomorphism.
Hence each leaf is of the form $g_0\cdot W_{k,\Gamma}$ for some $g_0\in \tbG(\CC)$, and vice versa.  
\end{proof}

\section{Ax-Schanuel for principal bundles with flat connections}\label{Ax-Schanuel for principal connections}
We recall the definitions of $\nabla$-special subvarieties and sparse groups, and the Ax-Schanuel theorem for foliated principal bundles proved by Bl\'{a}zquez-Sanz, Casale,  Freitag, and Nagloo \cite{BCFN}.  Then we prove that any $\nabla$-special subvariety of $X$ is contained in a proper weakly special subvariety, and that  $\tbG(\CC)/K$ is sparse. 

Let $G$ be a complex algebraic group. Let $\nabla$ be a flat principal $G$-connection on a principal $G$-bundle $P$ over a complex algebraic variety $X$.  The Galois group $\Gal(\nabla)$ of $\nabla$ is the algebraic group $\{g\in G: g\cdot M=M\}$ for any minimal $\nabla$-invariant subvariety $M$ of $P$. A subvariety $Z$ of $X$ is \textbf{$\nabla$-special} \cite{BCFN} if for each irreducible component $Z_i$ with smooth locus $Z_i^*$,  the group $\Gal(\nabla |_{Z_i^*})$ is a proper subgroup of $G$.

A Lie subalgebra of the Lie algebra $\mf{g}$ of $G$  is said to be algebraic if it is the Lie algebra of an algebraic subgroup of $G$.
The algebraic envelop $\ol{\mf{h}}$ of a Lie subalgebra $\mf{h}$ of  $\mf{g}$ is the smallest algebraic Lie subalgebra containing $\mf{h}$.
An algebraic group $G$ is said to be \textbf{sparse} \cite{BCFN} if for any proper Lie subalgebra $\mf{h}\subset \mf{g}$, the algebraic envelop $\ol{\mf{h}}$ is a proper Lie subalgebra of $\mf{g}$.

\begin{thm}[\cite{BCFN}]\label{AS, connections}
Let $G$ be a sparse complex algebraic group. Let $\nabla$ be a flat principal $G$-connection on the principal $G$-bundle $P$ over a complex algebraic variety $X$. Assume that the Galois group $\Gal(\nabla)=G$. Let $V$ be an algebraic subvariety of $P$ and $L$ a horizontal leaf. If $\dim V < \dim(V\cap L)+\dim G$, then the projection of $V\cap L$ in $X$ is contained in a $\nabla$-special subvariety.
\end{thm}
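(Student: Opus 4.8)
The plan, following \cite{BCFN}, is to read this as an Ax--Schanuel statement for the foliation cut out by $\nabla$ and to prove it by transporting Ax's differential-algebraic method to the setting of flat principal connections. First I would reduce to a ``dominant, full-monodromy'' situation. Pick an irreducible component $W$ of $V\cap L$ with $\dim W=\dim(V\cap L)$ and set $Z:=\ol{\pi_X(W)}^{\Zar}$, where $\pi_X\colon P\ra X$ is the bundle map. Since $L$ is transverse to the fibres, $\pi_X|_L$ is a local biholomorphism, so $\dim W=\dim Z$; hence $W$ is the entire leaf $L_{Z^*}$ of $\nabla|_{Z^*}$ lying over $Z^*$, and in particular $L_{Z^*}\subseteq V$. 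Replacing $V$ by $V\cap P|_{Z^*}$ (which cannot increase $\dim V$, so the hypothesis $\dim V<\dim(V\cap L)+\dim G$ survives) and $X$ by $Z^*$, it then suffices to show $X$ is $\nabla$-special, i.e. that $\Gal(\nabla)\subsetneq G$, carrying out this step over the component realizing $\dim(V\cap L)$ and handling the remaining components of $\ol{\pi_X(V\cap L)}^{\Zar}$ by the same scheme on proper subvarieties (so a descending induction on $\dim X$ enters here). In the reduced situation $W$ is the full leaf $L_X$ over $X$ with $L_X\subseteq V$, while $\dim V<\dim X+\dim G=\dim P$; hence $\ol{L_X}^{\Zar}\subseteq V\subsetneq P$, so the leaf is \emph{not} Zariski dense in $P$.

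It remains to show that a leaf which is not Zariski dense forces $\Gal(\nabla)\subsetneq G$, and this is the heart of the matter, where flatness and the sparseness hypothesis are used. Since the foliation distribution is algebraic --- it is the kernel of the rational $\mf{g}$-valued connection form --- one wants $S:=\ol{L_X}^{\Zar}$ to be $\nabla$-invariant and, because $\nabla$ is \emph{principal}, to meet a Zariski-generic fibre $P_x$ in a left coset of a fixed algebraic subgroup $G_0\subseteq G$. The Maurer--Cartan structure equation identifies $\Lie(G_0)$ with the algebraic envelope $\ol{\mf{h}}$ of the Lie subalgebra $\mf{h}\subseteq\mf{g}$ of infinitesimal symmetries of the leaf: here $\mf{h}$ is the (a priori non-algebraic) symmetry algebra coming from the transcendental object $L_X$, and it is precisely at this point that \emph{sparseness of $G$} is invoked, to guarantee $\ol{\mf{h}}\subsetneq\mf{g}$ whenever $\mf{h}\subsetneq\mf{g}$ --- so that $S\subsetneq P$ produces an honest proper $\nabla$-invariant subvariety (a sub-$G_0$-connection) rather than something whose algebraic envelope fills $P$. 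Equivalently, one proves the lemma that, for sparse $G$, a leaf is Zariski dense in $P$ if and only if $\Gal(\nabla)=G$. Once $S$ is known to be a proper $\nabla$-invariant subvariety it contains a minimal one $M$, and since $G$ acts simply transitively on each fibre, the stabilizer $\{g\in G:g\cdot M=M\}$ acts freely on $P_x$ with orbits inside the proper subvariety $M\cap P_x\subsetneq P_x\cong G$; hence $\Gal(\nabla)=\{g:g\cdot M=M\}$ has dimension $<\dim G$ and is proper, as required.

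I would expect the sparseness/structure-equation step --- pinning down $\ol{L_X}^{\Zar}$ as a sub-connection and thereby establishing the ``Zariski dense leaf versus full Galois'' dichotomy --- to be the main obstacle; this is exactly the transcendence input that distinguishes an Ax--Schanuel theorem from a formal statement. The remaining work is more routine: the reduction above must be done carefully with the several components of $V\cap L$ (and an induction on $\dim X$ when the projection of the relevant component is not dense), and one needs the standard differential-Galois bookkeeping relating the transcendence degree of the field generated by a fundamental system of flat sections to the dimension of the differential Galois group, in order to control $\dim\ol{L_X}^{\Zar}$ in terms of $\Gal(\nabla|_{Z^*})$. With these in hand, the rest is Ax's original scheme carried over to principal connections.
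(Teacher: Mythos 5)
This theorem is not proved in the paper at all: it is quoted verbatim from Bl\'{a}zquez-Sanz--Casale--Freitag--Nagloo \cite{BCFN} and used as a black box, so there is no internal argument to compare yours against. What you have written is therefore an attempt to reconstruct the proof of \cite{BCFN}, and judged on that basis it contains a genuine gap at the very first step.

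Your reduction asserts that, because $\pi_X|_L$ is a local biholomorphism, $\dim W=\dim Z$ where $Z=\ol{\pi_X(W)}^{\Zar}$, and hence that $W$ is the entire leaf $L_{Z^*}$ of $\nabla|_{Z^*}$ over $Z^*$, so that $V$ contains this leaf and its Zariski closure. Transversality only gives $\dim \pi_X(W)=\dim W$; the Zariski closure $Z$ of the analytic set $\pi_X(W)$ can have strictly larger dimension, and indeed if $\dim W=\dim Z$ always held the Ax--Schanuel statement would be nearly vacuous, since the jump between an analytic piece of a leaf and the Zariski closure of its projection is exactly the transcendence phenomenon being measured. Consequently $W$ is in general a lower-dimensional analytic subset of $L_{Z^*}$, the inclusion $\ol{L_{Z^*}}^{\Zar}\subseteq V$ fails, and your dichotomy ``leaf over $Z$ not Zariski dense in $P|_{Z}$'' never gets off the ground. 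The hypothesis $\dim V<\dim(V\cap L)+\dim G$ must instead be exploited through $\ol{W}^{\Zar}\subseteq V$, and relating $\dim\ol{W}^{\Zar}$ to $\Gal(\nabla|_{Z^*})$ when $\dim W<\dim Z$ is precisely the hard differential-algebraic content of \cite{BCFN}: Ax's trick of differentiating along $W$ with derivations tangent to it, the Maurer--Cartan/logarithmic-derivative computation producing a Lie subalgebra $\mf{h}\subset\mf{g}$, and only then sparseness to replace $\mf{h}$ by a proper algebraic subgroup giving a reduction of the structure group of $\nabla|_{Z^*}$, i.e.\ $\Gal(\nabla|_{Z^*})\subsetneq G$. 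Your second paragraph names these ingredients correctly, but at the level of a plan; the quantitative bridge from the dimension inequality to the properness of $\Gal(\nabla|_{Z^*})$ is exactly what is missing, so the proposal as written does not constitute a proof.
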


\begin{thm}\label{del special, weakly special}
Let $P$ be the principal $\tbG_{\CC}/K$-bundle over $J^d_kX$ with algebraic connection $\nabla$ in Theorem \ref{construction, foliated principal bundle}.  If $Z$ is an irreducible  $\nabla$-special subvariety in $J^d_kX$, then $\pi_X(Z)$ is contained in a proper weakly special subvariety. Moreover,  $\tbG_{\CC}/K=\Gal(\nabla)$.
\end{thm}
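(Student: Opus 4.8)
\textit{Plan.} The statement has two parts: the identification $\Gal(\nabla)=\tbG_{\CC}/H$, and the implication ``$\nabla$-special $\Rightarrow$ contained in a proper weakly special''. My plan is to first set up a dictionary between Galois groups of restrictions of $\nabla$ and algebraic monodromy groups of restrictions of the VMHS, and then deduce both parts from it, the second part being where the real work lies. For an irreducible $Z\subseteq X$ with smooth locus $Z^{*}$, the connection $\nabla|_{Z^{*}}$ on $P|_{Z^{*}}$ is flat, and by Theorem \ref{construction, foliated principal bundle} its horizontal leaves are the intersections with $\pi_{X}^{-1}(Z^{*})$ of the leaves $g\cdot W_{k}$. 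Its holonomy representation $\pi_{1}(Z^{*})\to\tbG_{\CC}/H$ is the composition of the monodromy representation of the VMHS restricted to $Z^{*}$ with the projection $\tbG_{\CC}\to\tbG_{\CC}/H$; hence the monodromy of $\nabla|_{Z^{*}}$ is the image modulo $H$ of $\Gamma_{Z^{*}}:=\mathrm{im}(\pi_{1}(Z^{*})\to\Gamma)$. For a flat connection the Galois group of \cite{BCFN}, i.e.\ the stabilizer of a minimal $\nabla$-invariant subvariety, coincides up to conjugacy with the Zariski closure of the holonomy (density theorem); I would make this explicit using the leaf picture (a minimal invariant subvariety is the Zariski closure of one leaf, and meets each fibre in a coset of that Zariski closure). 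Thus $\Gal(\nabla|_{Z^{*}})$ is, up to conjugacy, the image modulo $H$ of the connected algebraic monodromy group $\tbG_{Z^{*}}$ of the restricted VMHS, and $\Gal(\nabla|_{Z^{*}})\subsetneq\tbG_{\CC}/H$ iff $\tbG_{Z^{*}}\cdot H\subsetneq\tbG_{\CC}$; since $H\subseteq\tbG_{\CC}$ this forces $\tbG_{Z^{*}}\subsetneq\tbG$ as $\QQ$-groups.

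The equality $\tbG_{\CC}/H=\Gal(\nabla)$ then follows by applying the dictionary with $Z=X$ (so $Z^{*}=X$ since $X$ is smooth): the monodromy of $\nabla$ is the image of $\Gamma$ in $\tbG_{\CC}/H$, and $\Gamma$ is Zariski dense in $\tbG$ by the standing assumptions, so its image is Zariski dense in $\tbG_{\CC}/H$, giving $\Gal(\nabla)=\tbG_{\CC}/H$ on the nose. Concretely, one can take the minimal $\nabla$-invariant subvariety to be $W_{k}^{\Zar}$: since a $\tbG(\ZZ)$-translate of a local lifting of $\phi$ is again a local lifting of $\phi$, the set $W_{k}$ is stable under $\tbG(\ZZ)$, so $W_{k}^{\Zar}$ is stable under $\tbG_{\CC}$ and equals $\overline{P}$, which $\tbG_{\CC}/H$ stabilizes.

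For the first part, let $Z$ be irreducible and $\nabla$-special; by the dictionary $\tbG_{Z^{*}}\subsetneq\tbG$. I would then invoke the André--Deligne results \cite{A}: the theorem of the fixed part and the normality theorem for algebraic monodromy groups of admissible VMHS, together with the semisimple--unipotent Levi decomposition of $\tbG(\CC)$ from \cite{A} (the same decomposition used in Section \ref{Ax-Schanuel for principal connections} to prove sparseness). Applied to the restriction of the VMHS to the smallest special subvariety of $X$ containing $Z$, these yield that $\tbG_{Z^{*}}$ sits as a normal subgroup inside the relevant generic Mumford--Tate group, and by combining the semisimple factors (a connected normal subgroup of a semisimple group is a product of simple factors) with the unipotent radical $\tbG_{u}$, one manufactures a normal $\QQ$-subgroup $\tbM$ of $\tbMT_{h_{0}}$ with $\tbG_{Z^{*}}\subseteq\tbM$ and $D(\tbM)\subsetneq D$. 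One then checks $Z\subseteq\phi^{-1}q(D(\tbM))$: the lifted period image of the universal cover of $Z^{*}$ lies in a single $\tbM(\RR)\tbM_{u}(\CC)$-orbit meeting $\tbG(\ZZ)\cdot h_{0}$, because the monodromy of $Z^{*}$ lies in $\tbM$ and the period image of a subvariety lies in the orbit of its algebraic monodromy group through a suitably chosen lift of a Hodge-generic-in-$Z$ point; since $D(\tbM)\subsetneq D$, the weakly special subvariety $\phi^{-1}q(D(\tbM))$ is proper.

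The main obstacle is this last step: turning the purely group-theoretic inequality $\tbG_{Z^{*}}\subsetneq\tbG$ into containment in a proper weakly special subvariety \emph{as defined here}, i.e.\ via a normal subgroup of $\tbMT_{h_{0}}$ whose weak Mumford--Tate domain sits inside $D$. The delicate points are the correct handling of the unipotent radical $\tbG_{u}$ (the genuinely mixed, non-pure contribution), for which the explicit Levi decomposition of \cite{A} is needed; the verification of properness $D(\tbM)\neq D$; and the base-point bookkeeping needed to place the period image of $Z$ inside $q(D(\tbM))$ rather than merely inside a $\tbG(\RR)\tbG_{u}(\CC)$-translate of it. By comparison, the dictionary of the first paragraph, while requiring care with the flat-connection formalism, is essentially the standard density theorem made explicit through Theorem \ref{construction, foliated principal bundle}.
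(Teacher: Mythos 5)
Your proposal is correct in substance and follows essentially the same route as the paper: the horizontal leaves over $Z^*$ are translates of $W_k$ by elements of $\tbG(\CC)$, the conjugated algebraic monodromy group of $Z^*$ lands inside $\Gal(\nabla|_{Z^*})$, so $\nabla$-speciality forces that monodromy group to be proper in $\tbG_{\CC}$, after which Andr\'e--Deligne \cite{A} together with the algebraicity of weakly special subvarieties \cite{BBKT} gives the containment in a proper weakly special subvariety, and Zariski density of $\Gamma$ in $\tbG$ gives $\Gal(\nabla)=\tbG_{\CC}/H$; the extra detail you sketch for constructing the normal subgroup $\tb{M}$ is precisely what the paper delegates to the citation of \cite{A}. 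One caveat: your claim that $\Gal(\nabla|_{Z^*})$ \emph{equals} (up to conjugacy) the Zariski closure of the holonomy is not justified for the Galois group of \cite{BCFN} --- the stabilizer of the Zariski closure of a non-algebraic leaf can strictly contain the Zariski closure of the monodromy (e.g.\ the $\mathbb{G}_m$-connection over $\mathbb{A}^1$ with leaves $z=ce^{x}$ has trivial monodromy but Galois group all of $\mathbb{G}_m$) --- yet this does not damage your argument, since only the inclusion of the monodromy closure in the Galois group is used, which is exactly the inclusion the paper establishes.
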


\begin{proof}
Let $\ell$ be a horizontal leaf over $Z^*$, where $Z^*$ is the smooth locus of $Z$. 
Let  $\Gamma'$ be the monodromy group for $\pi_X(Z^*)$.
By Theorem \ref{construction, foliated principal bundle}, each leaf in $P$ is of the form $g\cdot W_{k,\Gamma}$ for some $g\in \tbG(\CC)$. The group $g\Gamma'g^{-1}$ stabilizes $\ell$. Therefore, the algebraic group  $g\ol{\Gamma'}g^{-1}$ stabilizes the Zariski closure $\ol{\ell}$ of $\ell$. 
By definition and \cite[Lemma 2.2]{BCFN},  $\Gal(\nabla |_{Z^*})=\{gK\in \tbG_{\CC}/K: g\cdot \ol{\ell}=\ol{\ell}\}$, 
so $(g\ol{\Gamma'}g^{-1})K/K\subset \Gal(\nabla |_{Z^*})$. Then since $Z$ is $\nabla$-special, $(g\ol{\Gamma'}g^{-1})K/K$ is a proper subgroup of $\tbG_{\CC}/K$, so $\ol{\Gamma'}$ is a proper subgroup of $\tbG_{\CC}$.  By Andr\'{e}-Deligne (\cite{A}, \cite[Theorem A.8]{C}), $\pi_X(Z^*)$ is contained in a proper weakly special subvariety, and so is $\pi_X(Z)$ because $\pi_X(Z)$ and $\pi_X(Z^*)$ have the same closure and because weakly special subvarieties are closed \cite[Lemma 6.2]{BD}. 
Similarly,  $\ol{\Gamma}/K\subset \Gal(\nabla)$.
Since $\Gamma\subset \tbG(\ZZ)^+$, the $\QQ$-closure of $\Gamma$ is $\tbG$. By  \cite[Lemma A.4]{C}, the $\CC$-Zariski closure of $\Gamma$ is $\tbG_{\CC}$.  Therefore,  
$$\tbG_{\CC}/K=\ol{\Gamma}/K\subset \Gal(\nabla),$$ 
so $\tbG_{\CC}/K=\Gal(\nabla).$
\end{proof}

\begin{lemma}\label{semisimple unipotent sparse}
If $G$ is an algebraic group whose quotient by its unipotent radical $G_u$ is semisimple, then $G$ is sparse.
\end{lemma}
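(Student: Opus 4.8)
The plan is to prove the contrapositive form of sparseness directly: if $\mf{h}\subseteq\mf{g}:=\Lie(G)$ is a Lie subalgebra whose algebraic envelope satisfies $\ol{\mf{h}}=\mf{g}$, then already $\mf{h}=\mf{g}$. Since we are over $\CC$, I would begin with the Levi decomposition $G=G_u\rtimes L$ with $L$ semisimple, so that $\mf{g}=\mf{g}_u\rtimes\mf{l}$ with $\mf{g}_u=\Lie(G_u)$ a nilpotent ideal all of whose elements are nilpotent, and $\mf{l}$ a semisimple subalgebra; I would then fix the quotient map $p\colon\mf{g}\to\mf{l}$, which is the differential of an algebraic homomorphism $\pi\colon G\to L$. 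The overall strategy is: first use the semisimple quotient to see that $p(\mf{h})=\mf{l}$, and then feed this back to show that $\mf{h}$ itself is algebraic.

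For the semisimple quotient I would first record that for a subalgebra $\mf{s}\subseteq\mf{l}$, the condition $\ol{\mf{s}}=\mf{l}$ forces $\mf{s}=\mf{l}$: by the standard fact that a Lie subalgebra is an ideal of its algebraic envelope, $\mf{s}$ is an ideal of $\ol{\mf{s}}=\mf{l}$, hence $\mf{s}$ is semisimple (ideals of semisimple Lie algebras are semisimple), hence algebraic (a semisimple Lie subalgebra of $\Lie(G)$ is algebraic in characteristic zero), so $\mf{s}=\ol{\mf{s}}=\mf{l}$. Applying the contrapositive: if $p(\mf{h})\subsetneq\mf{l}$ then $\ol{p(\mf{h})}\subsetneq\mf{l}$, and the preimage $p^{-1}(\ol{p(\mf{h})})$ ---which is the Lie algebra of the algebraic subgroup $\pi^{-1}(K)\le G$, where $K\le L$ is the envelope group of $p(\mf{h})$--- is a proper algebraic subalgebra of $\mf{g}$ containing $\mf{h}$, contradicting $\ol{\mf{h}}=\mf{g}$. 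Hence $p(\mf{h})=\mf{l}$.

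With $p(\mf{h})=\mf{l}$ in hand, set $\mf{h}_u:=\mf{h}\cap\mf{g}_u=\ker(p|_{\mf{h}})$. Since $\mf{g}_u$ is nilpotent, $\mf{h}_u$ is a solvable ideal of $\mf{h}$, so $\mf{h}_u\subseteq\operatorname{rad}(\mf{h})$; conversely $p(\operatorname{rad}(\mf{h}))$ is a solvable ideal of the semisimple $\mf{l}$, hence zero, so $\operatorname{rad}(\mf{h})\subseteq\mf{h}_u$; thus $\operatorname{rad}(\mf{h})=\mf{h}_u$, and a Levi--Malcev decomposition gives $\mf{h}=\mf{h}_u\rtimes\mf{m}$ with $\mf{m}$ semisimple. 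Now $\mf{h}_u$, a nilpotent subalgebra of $\mf{g}_u$, exponentiates to a closed unipotent subgroup $U_0\le G_u$, and $\mf{m}$, being semisimple, integrates to a closed semisimple subgroup $M\le G$; since $\mf{m}$ normalizes $\mf{h}_u=\operatorname{rad}(\mf{h})$, the group $M$ normalizes $U_0$, and $U_0\rtimes M$ is a closed algebraic subgroup of $G$ with Lie algebra $\mf{h}_u\oplus\mf{m}=\mf{h}$. Therefore $\mf{h}$ is algebraic, $\ol{\mf{h}}=\mf{h}$, and the hypothesis $\ol{\mf{h}}=\mf{g}$ gives $\mf{h}=\mf{g}$.

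The computational content is light; the part needing care is citing the right structural facts about algebraic envelopes ---that a subalgebra is an ideal of its envelope, that semisimple and nilpotent subalgebras are algebraic over $\CC$, and that envelopes pull back along $\pi$--- and making sure the semisimplicity of $G/G_u$ is genuinely used. It enters in exactly one place, namely forcing $\operatorname{rad}(\mf{h})\subseteq\mf{g}_u$; this is what fails when $G/G_u$ is not semisimple, which is why $G$ need not be sparse when that quotient contains a torus.
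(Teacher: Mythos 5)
Your proof is correct, and it follows the same overall strategy as the paper (prove the contrapositive: if $\ol{\mf{h}}=\mf{g}$, show $\mf{h}$ is in fact algebraic by splitting it into a semisimple piece and a piece inside $\mf{g}_u$, integrating each to an algebraic subgroup, and concluding $\mf{h}=\ol{\mf{h}}=\mf{g}$), but the route through the decomposition is genuinely different. The paper applies the fact that $\mf{h}$ is an ideal of its envelope directly to $\mf{h}\subset\mf{g}$, so that $\mf{h}\trianglelefteq\mf{g}$, and then invokes Bourbaki's result for ideals to get a Levi decomposition of $\mf{h}$ aligned with the fixed splitting $\mf{g}=\mf{g}_s\ltimes\mf{g}_u$, namely $\mf{h}=(\mf{h}\cap\mf{g}_s)\ltimes(\mf{h}\cap\mf{g}_u)$, and integrates these two intersections. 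You never use that $\mf{h}$ is an ideal of $\mf{g}$: you use the ideal-in-envelope fact only inside the semisimple quotient $\mf{l}$ to force $p(\mf{h})=\mf{l}$ (via the pullback $p^{-1}(\ol{p(\mf{h})})$, which is a clean way to transfer properness of envelopes along $\pi$), then take an abstract Levi--Malcev decomposition $\mf{h}=\mf{h}_u\rtimes\mf{m}$ whose semisimple part $\mf{m}$ need not sit inside the chosen Levi of $\mf{g}$, and integrate using algebraicity of semisimple and of unipotent subalgebras. What each buys: the paper's aligned decomposition makes the final assembly $H_1\ltimes H_2$ immediate at the cost of the Bourbaki citation; your version avoids that citation, isolates exactly where semisimplicity of $G/G_u$ enters (forcing $\operatorname{rad}(\mf{h})\subseteq\mf{g}_u$, as you note), and in fact proves the slightly stronger statement that any subalgebra surjecting onto $\mf{l}$ is algebraic, at the cost of the extra (standard, and correctly handled) checks that $M$ normalizes $U_0$ and that $U_0M$ is a closed subgroup with Lie algebra $\mf{h}$.
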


\begin{proof}
Let $\mf{g}, \mf{g}_s$ and $\mf{g}_u$ be the Lie algebras of $G, G_s$ and $G_u$ respectively, where $G_s$ is a Levi subgroup of $G$. Let $\mf{h}$ be a  Lie subalgebra of  $\mf{g}=\mf{g}_s\ltimes \mf{g}_u$. It is a general fact that  $\mf{h}$ is an ideal in its algebraic envelope $\ol{\mf{h}}$ \cite[Example 3.5]{BCFN}.  Suppose $\ol{\mf{h}}=\mf{g}$. By \cite[\S6, no. 8, Cor. 4]{B}, $\mf{h}\cap \mf{g}_u$ is the radical of $\mf{h}$ and $\mf{h}\cap \mf{g}_s$ is a Levi subalgebra of $\mf{h}$, so $\mf{h}=(\mf{h}\cap \mf{g}_s)\ltimes (\mf{h}\cap \mf{g}_u)$. The ideal $\mf{h}\cap \mf{g}_s$ of the semisimple Lie algebra $\mf{g}_s$ is semisimple, so there exists an algebraic subgroup $H_1$ of $G_s$ whose Lie algebra is $\mf{h}\cap \mf{g}_s$. Moreover, the exponential map gives an algebraic variety isomorphism \cite[Prop. 14.32]{Mil} between the unipotent group $G_u$ and its Lie algebra, so there exists an algebraic subgroup $H_2$ of $G_u$ whose Lie algebra is $\mf{h}\cap \mf{g}_u$. The Lie algebra of the algebraic subgroup $H_1\ltimes H_2$ of $G$ is thus $\mf{h}$, so $\mf{h}=\ol{\mf{h}}=\mf{g}$. Therefore, if $\mf{h}$ is proper, then $\ol{\mf{h}}$ is also proper.
\end{proof}

\begin{cor}\label{sparsity of monodromy}
The algebraic monodromy group $\tbG_{\CC}$ and the quotient $\tbG_{\CC}/K$ are sparse.
\end{cor}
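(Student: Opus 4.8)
The plan is to deduce both sparsity statements from Lemma~\ref{semisimple unipotent sparse}, so that the whole proof reduces to checking that $\tbG_{\CC}$ and $\tbG_{\CC}/H$ each have semisimple quotient by their unipotent radical.

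First I would treat $\tbG_{\CC}$. The key input is the structure of the algebraic monodromy group of an admissible graded-polarized VMHS: by André and Deligne \cite{A}, $\tbG$ admits a Levi decomposition $\tbG=\tbG_s\ltimes\tbG_u$ whose reductive part $\tbG_s$ is semisimple, equivalently $\tbG/\tbG_u$ is semisimple. (In the pure case this is Deligne's semisimplicity of the algebraic monodromy group; the mixed refinement, that the monodromy group is an extension of a semisimple group by a unipotent group, is André's.) Passing to $\CC$-points does not affect this, so $\tbG_{\CC}$ satisfies the hypothesis of Lemma~\ref{semisimple unipotent sparse} and is therefore sparse.

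Next I would handle $\tbG_{\CC}/H$. Since $H$ is normal in $\tbG_{\CC}$, being the kernel of $\tbG_{\CC}\ra\Aut(\wc{D})$, the quotient $\tbG_{\CC}/H$ is an algebraic group and it is a surjective image of $\tbG_{\CC}$ under the projection $\pi$. I would then use the elementary fact that, in characteristic zero, a surjection of linear algebraic groups carries the unipotent radical of the source onto the unipotent radical of the target: $\pi(\tbG_u(\CC))$ is a normal unipotent subgroup of $\tbG_{\CC}/H$, hence lies in its unipotent radical, while $(\tbG_{\CC}/H)/\pi(\tbG_u(\CC))$ is a quotient of the semisimple group $\tbG_{\CC}/\tbG_u(\CC)$ and hence reductive, which forces $\pi(\tbG_u(\CC))$ to be the full unipotent radical. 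Consequently $\tbG_{\CC}/H$ modulo its unipotent radical is a quotient of a semisimple group, hence semisimple, and Lemma~\ref{semisimple unipotent sparse} applies once more to give that $\tbG_{\CC}/H$ is sparse.

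The only non-formal ingredient is the semisimplicity of $\tbG/\tbG_u$, i.e. the André--Deligne Levi decomposition \cite{A}; this is the step doing the real work. Everything after that --- passage to $\CC$-points, descent along the quotient map $\pi$, and the final invocation of Lemma~\ref{semisimple unipotent sparse} --- is routine algebraic group theory, so I do not anticipate any genuine obstacle once the structure theorem for $\tbG$ is in hand.
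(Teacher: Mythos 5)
Your proposal is correct and follows essentially the same route as the paper: invoke Andr\'{e}'s Levi decomposition $\tbG=\tbG_s\ltimes\tbG_u$ with $\tbG_s$ semisimple, note that this persists after base change to $\CC$, and apply Lemma~\ref{semisimple unipotent sparse}. The only difference is that you explicitly verify the hypothesis of that lemma for the quotient $\tbG_{\CC}/H$ (image of the unipotent radical is the unipotent radical, and the reductive quotient stays semisimple), a routine step the paper leaves implicit.
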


\begin{proof}
Formations of the radical and the unipotent radical commute with field extensions in characteristic 0, so by Andr\'{e} \cite[Corollary 2]{A}, we can write  $\tbG_{\CC}=\tbG_s\ltimes \tbG_u$, where $\tbG_s$ is a semisimple Levi subgroup, while $\tbG_u$ is the unipotent radical and the radical. By Lemma \ref{semisimple unipotent sparse}, $\tbG_{\CC}$ and $\tbG_{\CC}/K$ are sparse.
\end{proof}

\section{Proofs of main theorem and corollaries}
\label{Proof}

We now prove Theorem \ref{main theorem}, Corollary \ref{Ax-Schanuel for variations of mixed Hodge structures: mod gamma}, and Corollary \ref{mixed AS, coordinates}, which are restated as Theorem \ref{main theorem, restate}, Corollary \ref{Ax-Schanuel for variations of mixed Hodge structures: mod gamma, restate},  and Corollary \ref{mixed AS, coordinates, restate} below.

\begin{thm}\label{main theorem, restate}
 Let $U$ be an irreducible analytic subset of $W_k$.  If 
 $$\dim U^{\Zar}-\dim U<\dim W_k^{\Zar}-\dim W_k,$$
 then $\pi_X(\pi_{J^d_kX}(U))$ is contained in a proper weakly special subvariety of $X$.
\end{thm}

\begin{proof}
Recall that we assume $\Gamma\subset \Gamma_1$.
Let $S$ be the set of all distinct representatives of the cosets in $\Gamma_1/ \Gamma$.
We have 
$$W_k=\bigcup_{g\in S} g W_{k,\Gamma}$$ 
and 
$$\dim W_k=\dim J^d_kX=\dim W_{k,\Gamma}.$$

By Lemma \ref{bundle equals closure of set of k-jets}, we have $P=W_k^{\Zar}$.
First assume $k\geq k_0$.  
Since $U$ is irreducible, $g^{-1}U\subset W_{k,\Gamma}$ for some $g\in S$.
By Theorem \ref{construction, foliated principal bundle}, 
$W_{k,\Gamma}$ is a leaf in $P$ and $\dim P-\dim W_{k,\Gamma}=\dim (\tbG_{\CC}/ K)$.
Then
\begin{align*}
\dim (g^{-1}U)^{\Zar}
&=\dim U^{\Zar}
\\&<\dim U+\dim W_k^{\Zar}-\dim W_k
\\&=\dim g^{-1}U+\dim P-\dim W_{k,\Gamma}
\\&\leq \dim((g^{-1}U)^{\Zar}\cap W_{k,\Gamma})+\dim(\tbG_{\CC}/K).
\end{align*}
We have 
$$\pi_X(\pi_{J^d_kX}(U))=\pi_X(\pi_{J^d_kX}(g^{-1}U))\subset\pi_X(\pi_{J^d_kX}((g^{-1}U)^{\Zar}\cap  W_{k,\Gamma})).$$
By Lemma \ref{algebraicity of the bundle}, $(g^{-1}U)^{\Zar}\subset P$. 
Then by Corollary \ref{sparsity of monodromy}, Theorem \ref{AS, connections} and Theorem \ref{del special, weakly special},  $\pi_X(\pi_{J^d_kX}(U))$ is contained in a proper weakly special subvariety of $X$. 

We now prove the theorem for $1\leq k<k_0$.
Let $P_{k_0}=\tbG(\CC)\cdot W_{k_0}$. 
Recall that $P_{k_0}=W_{k_0}^{\Zar}$ and $P=W_k^{\Zar}$ in Lemma \ref{bundle equals closure of set of k-jets}. 
Let $\rho: P_{k_0}\ra P$ be the projection defined by lowering the order of jets. 
Let $U_{k_0}:=W_{k_0}\cap \rho^{-1}(U)$, which implies that $U_{k_0}^{\Zar}\subset \rho^{-1}(U^{\Zar})$.    We have 
$$\dim \rho^{-1}(U^{\Zar})-\dim U^{\Zar}\leq \dim P_{k_0}- \dim P.$$
The map $J^d_kq: J^d_k D\ra J^d_k (\Gamma_1\bs D)$ is \'{e}tale.
Hence, the map $\rho|_{W_{k_0}}:W_{k_0}\ra W_k$ is onto and indeed each fiber of $\rho|_{W_{k_0}}$ projects onto $J^d_{k_0}X$.
Let $\tau:J^d_{k_0}X\ra J^d_kX$ be the projection defined by lowering the order of jets.
It follows that $\tau$ restricts to a surjective map $\pi_{J^d_{k_0}X}(U_{k_0})\ra \pi_{J^d_kX}(U)$
and that $\tau^{-1}(\pi_{J^d_kX}(U))=\pi_{J^d_{k_0}X}(U_{k_0})$. 
The map $\tau$ has equidimensional fibers. 
Therefore,
\begin{align*}
\dim W_{k_0}-\dim W_k
&= \dim J^d_{k_0}X-\dim J^d_kX
\\&= \dim \tau^{-1}(\pi_{J^d_kX}(U))-\dim \pi_{J^d_kX}(U)
\\&=\dim \pi_{J^d_{k_0}X}(U_{k_0})- \dim \pi_{J^d_kX}(U)
\\&=\dim U_{k_0}-\dim U.
\end{align*}
We now have that
\begin{align*}
\dim U_{k_0}^{\Zar}-\dim U_{k_0}
&\leq \dim \rho^{-1}(U^{\Zar})-\dim U_{k_0}
\\& \leq \dim P_{k_0}- \dim P+\dim U^{\Zar}-\dim U_{k_0}
\\&< \dim P_{k_0} +\dim U-\dim W_k-\dim U_{k_0}
\\&=\dim W_{k_0}^{\Zar}-\dim W_{k_0}.
\end{align*}
By the case for $k=k_0$, $\pi_X(\pi_{J^d_{k_0}X}(U_{k_0}))$ is contained in a proper weakly special subvariety of $X$. We are done since $\pi_X(\pi_{J^d_kX}(U))\subset \pi_X(\pi_{J^d_{k_0}X}(U_{k_0}))$.
\end{proof}

\begin{cor}\label{Ax-Schanuel for variations of mixed Hodge structures: mod gamma, restate}
Let $U$ be an irreducible analytic subset of $W_{k,\Gamma}$. 
Let $U^{\Zar}$ be the Zariski closure of $U$ in $J^d_kX\times J^d_k\wc{\mc{M}}$.   
If 
$$\dim U^{\Zar}-\dim U<\dim W_{k,\Gamma}^{\Zar}-\dim W_{k,\Gamma},$$ 
then $\pi_X(\pi_{J^d_kX}(U))$ is contained in a proper weakly special subvariety of $X$.
\end{cor}

\begin{proof}
Recall that we no longer assume $\Gamma\subset \Gamma_1$.
Let $\rho: \pi_1(X)\ra \Gamma$ be the monodromy representation attached to the VMHS. 
The group $\Gamma\cap \tbG(\ZZ)^+=\Gamma\cap \tbG(\ZZ)\cap \tbG(\RR)^+$ is of finite index in $\Gamma$ since $\ol{\Gamma}^{\Zar}/\tbG$ and $\tbG(\RR)/\tbG(\RR)^+$ are finite.
Hence, $\Gamma_0:=\Gamma\cap \Gamma_1$ is of finite index in $\Gamma$ since $\Gamma_1$ is of finite index in $\tbG(\ZZ)^+$.
Let $A=\rho^{-1}(\Gamma_0)$. Then $\pi_1(X)/A\cong \Gamma/\Gamma_0$ is finite. 
Let $f:\wt{X}\ra X$ be the finite covering such that $f_*(\pi_1(\wt{X}))=A$. The monodromy representation of the pullback of the VMHS to $\wt{X}$ is given by 
$\wt{\rho}: \pi_1(\wt{X})\ra \Gamma_0$. 
Since $\Gamma/ \Gamma_0$ and $\ol{\Gamma}^{\Zar}/\tbG$ are finite, we have 
$\dim \ol{\Gamma_0}^{\Zar}= \dim \ol{\Gamma}^{\Zar}=\dim \tbG$,
so $\ol{\Gamma_0}^{\Zar}=\tbG$ by connectedness of $\tbG$.
We have the period mapping $\wt{\phi}: \wt{X}\ra \Gamma_0\bs D$.
Consider the fiber product
\begin{center}
\begin{tikzcd}
W_{k,\Gamma_0}\arrow[r]\arrow[d] & J^d_kD\arrow[d,"J^d_kq"]
\\J^d_k\wt{X}\arrow[r, "J^d_k\wt{\phi}"] & J^d_k(\Gamma_0\backslash D).
\end{tikzcd}
\end{center}
The covering $\wt{X}\ra X$ induces an \'{e}tale map $J^d_k\wt{X}\times J^d_k \wc{\mc{M}}\ra J^d_k X\times J^d_k \wc{\mc{M}}$.
Let $\wt{U}$ be an irreducible component of the preimage of $U$ under this map contained in $W_{k,\Gamma_0}$.
By Lemma \ref{bundle equals closure of set of k-jets} and Theorem \ref{construction, foliated principal bundle}, 
\begin{align*}
\dim W_{k,\Gamma}^{\Zar}
&=\dim (\tbG(\CC)/K)+\dim J^d_kX
\\&=\dim (\tbG(\CC)/K)+\dim J^d_k\wt{X}
\\&=\dim W_{k,\Gamma_0}^{\Zar}.
\end{align*}
We have 
\begin{align*}
\dim \wt{U}^{\Zar}-\dim \wt{U}
&\leq \dim U^{\Zar}-\dim U
\\&<\dim W_{k,\Gamma}^{\Zar}-\dim W_{k,\Gamma}
\\&=\dim W_{k,\Gamma_0}^{\Zar}-\dim W_{k,\Gamma_0}.
\end{align*}
By Theorem \ref{main theorem, restate}, the projection $\pi_{\wt{X}}(\pi_{{J^d_k}\wt{X}}(\wt{U}))$ is contained in a proper weakly special subvariety of $\wt{X}$, so $\pi_X(\pi_{J^d_kX}(U))$ is contained in a proper weakly special subvariety of $X$.
\end{proof}

Let $\Delta$ be the open unit disk.  

\begin{cor}\label{mixed AS, coordinates, restate}
Let $\wt{\phi}$ be a local lifting of the period mapping $\phi$ on an open subset $B$. 
Let $v: \Delta^{\dim \wc{D}}\ra \widecheck{D}$ and  $u: \Delta^{\dim X}\ra  B$ be open embeddings, obtained by restricting affine charts, such that  $(\wt{\phi}\circ u)(\Delta^{\dim X})\subset v(\Delta^{\dim \wc{D}})$. 
Let $f: \Delta^d\ra B$ be a holomorphic mapping such that $f(\Delta^d)\subset u(\Delta^{\dim X})$.
Write $z=(z_1,\dots, z_d)$, where $z_i$ are the coordinates of $\Delta^d$.
If
$$\trd_{\CC} \CC(\partial^{\alpha}(u^{-1}\circ f)(z), \partial^{\alpha}(v^{-1}\circ {\wt{\phi}}\circ f)(z): |\alpha|\leq k)< \rank(f)+\dim W_{k,\Gamma}^{\Zar}-\dim W_{k,\Gamma},$$
then $f(\Delta^d)$ is contained in a proper weakly special subvariety of $X$.
\end{cor}

\begin{proof}
We have a map $\sigma:\Delta^d\ra W_{k,\Gamma}$ defined by $\sigma(z)=(j_{k,f(z)}f, (J^d_k\wt{\phi})(j_{k,f(z)}f))$, 
where $j_{k,f(z)}f$ is the $k$-jet of $f$ at $f(z)$.
Let $U$ be the image of $\sigma$. Using the coordinate charts $u$ and  $v$, the map $\sigma$ can be expressed as a tuple of functions, including $\partial^{\alpha}(u^{-1}\circ f)(z)$ and $\partial^{\alpha}(v^{-1}\circ {\wt{\phi}}\circ f)(z)$, where $|\alpha|\leq k$. 
We have 
$$\dim U^{\Zar}=\trd_{\CC} \CC(\partial^{\alpha}(u^{-1}\circ f)(z), \partial^{\alpha}(v^{-1}\circ {\wt{\phi}}\circ f)(z): |\alpha|\leq k).$$
We also have
$\rank(f)\leq \rank(\sigma)= \dim U$.
Then by assumption,
$$\dim U^{\Zar} <\dim U+\dim W_{k,\Gamma}^{\Zar}-\dim W_{k,\Gamma}.$$  
so $\pi_X(\pi_{J^d_kX}(U))$ is contained in a proper weakly special subvariety of $X$ by Corollary \ref{Ax-Schanuel for variations of mixed Hodge structures: mod gamma, restate}.
Since $f(\Delta^d)\subset \pi_X(\pi_{J^d_kX}(U))$, the corollary follows. 
\end{proof}

\begin{bibdiv}
\begin{biblist}

\bib{A}{article} {author={Y. Andr\'{e}}, title={Mumford-Tate groups of mixed Hodge structures and the theorem of the fixed part},  journal={Compositio Math.}, volume={82(1)}, date={1992},  pages={1--24}}

\bib{AEK}{article}{author={V. Aslanyan}, author={S. Eterovi\'{c}}, author={J. Kirby},  title={A closure operator respecting the modular $j$-function}, journal={Israel J. of Math.}, volume={253}, date={2023}, pages={321--357}}

\bib{Ax}{article}{author={J. Ax}, title={On Schanuel's conjectures}, journal={Ann. of Math.}, volume={93(2)}, date={1971}, pages={252-268}}

\bib{Ax72}{article}{
  author={J. Ax },
  title={Some Topics in Differential Algebraic Geometry I: Analytic Subgroups of Algebraic Groups},
  journal={Amer. J. Math.},
  volume={94(4)},
  date={1972},
  pages={1195--1204}
}

\bib{BBKT}{article} {author={B. Bakker}, author={Y. Brunebarbe}, author={B. Klingler}, author={J. Tsimerman}, title={Definability of mixed period maps}, journal={J. Eur. Math. Soc}, volume={26(6)}, pages={2191--2209}}

\bib{BBT}{article}{
  author = {B. Bakker}, author={Y. Brunebarbe}, author={J. Tsimerman},
  title = {O-minimal GAGA and a conjecture of Griffiths},
  journal = {Invent. Math.},
  volume = {232},
  year = {2023},
  pages = {163--228}
}

\bib{BT}{article} {author={B. Bakker}, author={J. Tsimerman}, title={The Ax-Schanuel conjecture for variations of Hodge structures},  journal={Invent. Math.}, volume={217(1)}, date={2019},  pages={77--94}}

\bib{BT22}{article} {author={B. Bakker }, author={J. Tsimerman}, title={Functional Transcendence of Periods and the Geometric Andr\'{e}--Grothendieck Period Conjecture},  pages={arXiv:2208.05182v2}}

\bib{BKU}{article} {author={G. Baldi}, author={B. Klingler}, author={E. Ullmo},  title={On the distribution of the Hodge locus},   date={2024}, journal={Invent. Math.}, volume={235}, pages={441--487}}

\bib{BU}{article} {author={G. Baldi}, author={E. Ullmo},  title={Special subvarieties of non-arithmetic ball quotients and Hodge Theory}, journal={Ann. of Math.}, volume={197(1)}, date={2023}, pages={159--220}}

\bib{BD}{article} {author={F. Barroero}, author={G. A. Dill},  title={Distinguished categories and the Zilber-Pink conjecture},  pages={arXiv:2103.07422v4, to appear in Amer. J. Math.}}

\bib{BCFN}{article} {author={D. Bl\'{a}zquez-Sanz}, author={G. Casale},  author={J. Freitag},  author={J. Nagloo},  title={A differential approach to the Ax-Schanuel, I},  pages={arXiv:2102.03384v4}}

\bib{B}{book} {author={N. Bourbaki},  title={\'{E}l\'{e}ments de math\'{e}matique, Groupes et alg\'{e}bres de Lie,  Chapitre 1}, publisher={Springer}, date={1972}}

\bib{C}{article}{author={K. C. T. Chiu}, title={Ax-Schanuel for variations of mixed Hodge structures}, journal={Math. Ann.}, volume={391}, date={2025}, pages={1681--1710}}

\bib{DR}{article}{author={C. Daw}, author={J. Ren}, title={Applications of the hyperbolic Ax-Schanuel conjecture}, journal={Compositio Math.}, volume={154(9)}, date={2018}, pages={1843-1888}}

\bib{DGH}{article}{author={V. Dimitrov}, author={Z. Gao}, author={P. Habegger}, title={Uniformity in Mordell-Lang for curves}, journal={Ann. of Math.}, volume={194(1)}, date={2021}, pages={237--298}}

\bib{DM}{article}{author={L. van den Dries}, author={C. Miller}, title={On the real exponential field with restricted analytic functions}, journal={Israel J. Math.}, volume={85}, date={1994}, pages={19-56}}

\bib{ES}{article}{author={S. Eterovi\'{c}}, author={T. Scanlon}, title={Likely intersections}, pages={arXiv:2211.10592v2}}

\bib{EZ}{article}{author={S. Eterovi\'{c}}, author={R. Zhao}, title={Algebraic Varieties and Automorphic Functions}, journal={Int. Math. Res. Not. IMRN}, date={2025}, volume={2024(4)}, pages={1--13}}

\bib{G}{article} {author={Z. Gao}, title={Mixed Ax-Schanuel for the universal abelian varieties and some applications},    journal={Compositio Math.}, volume={156(11)}, date={2020}, pages={2263--2297}}

\bib{Gao20}{article}{author={Z. Gao }, title={Generic rank of Betti map and unlikely intersections}, journal={Compositio Math.}, volume={156(12)}, date={2020}, pages={2469--2509}}

\bib{GK}{article}{author={Z. Gao}, author={B. Klingler}, title={The Ax-Schanuel conjecture for variations of mixed Hodge structures}, volume={388}, date={2024}, pages={3847--3895}}

\bib{H}{article} {author={D. R. Hast},  title={Functional transcendence for the unipotent Albanese map}, journal={Algebra \& Number Theory}, volume={15(6)}, date={2021}, pages={1565--1580}}

\bib{HN}{article}{author={J. Huang}, author={T. W. Ng}, title={Ax-Schanuel Type Theorems on Functional Transcendence via Nevanlinna Theory}, journal={Math. Z.}, volume={300}, date={2022} pages={1639--1656}}

\bib{JW}{book}{author={G. O. Jones}, author={A. J. Wilkie}, title={O-Minimality and Diophantine Geometry}, publisher={Cambridge University Press}, date={2015}}

\bib{Kli17}{article}{author={B. Klingler}, title={Hodge loci and atypical intersections: conjectures}, pages={arXiv:1711.09387v1}}

\bib{LS}{article} {author={B. Lawrence}, author={W. Sawin}, title={The Shafarevich conjecture for hypersurfaces in abelian varieties}, pages={arXiv:2004.09046v4}}

\bib{LV}{article} {author={B. Lawrence}, author={A. Venkatesh}, title={Diophantine problems and $p$-adic period mappings},  journal={Invent. Math.}, volume={221(3)}, date={2020},   pages={893--999}}

\bib{Mil}{book} {author={J. S. Milne},  title={Algebraic Groups: The Theory of Group Schemes of Finite Type Over a Field}, publisher={Cambridge University Press}, date={2017}}

\bib{Pap}{article}{author={G. Papas}, title={Weak Ax-Schanuel for $GL_n$}, journal={Israel J. Math.}, date={2025}}

\bib{MPT}{article} {author={N. Mok}, author={J. Pila}, author={J. Tsimerman}, title={Ax-Schanuel for Shimura varieties},  journal={Ann. of Math.}, volume={189(3)}, date={2019},  pages={945--978}}

\bib{Nog24}{article}{
  author={J. Noguchi},
  title={Analytic Ax-Schanuel for semi-abelian varieties and Nevanlinna theory},
  journal={J. Math. Soc. Japan},
  volume={76(1)},
  date={2024},
  pages={1--22}
}

\bib{Pila22} {book}{author={J. Pila}, title={Point-Counting and the Zilber-Pink Conjecture}, publisher={Cambridge University Press}, date={2022}}

\bib{PilSca}{article}{author={J. Pila}, author={T. Scanlon}, title={Effective transcendental Zilber-Pink for variations of Hodge structures}, pages={arXiv:2105.05845v1}}

\bib{PT}{article} {author={J. Pila}, author={J. Tsimerman}, title={Ax-Schanuel for the $j$-function},  journal={Duke Math. J.}, volume={165(13)}, date={2016},  pages={2587--2605}}

\bib{PZ}{article}{author={J. Pila}, author={U. Zannier}, title={Rational points in periodic analytic sets and the Manin-Mumford conjecture}, journal={Rend. Lincei Mat. Appl.}, volume={19}, date={2008}, pages={149--162}}

\bib{Sca18}{article}{
  author={T. Scanlon},
  title={Algebraic differential equations from covering maps},
  journal={Adv. Math.},
  volume={330},
  date={2018},
  pages={1071--1100}
}

\bib{Urb23}{article}{
  author={D. Urbanik },
  title={Sets of special subvarieties of bounded degree},
  journal={Compos. Math.},
  volume={159},
  number={3},
  date={2023},
  pages={616--657}
}

\bib{Urb24}{article}{
  author={D. Urbanik},
  title={On the transcendence of period images},
  journal={J. Differential Geom.},
  volume={127(2)},
  date={2024},
  pages={615--662}
}

\end{biblist}
\end{bibdiv}

\end{document}